\def\bl{\begin{lemma}}
\def\el{\end{lemma}}
\def\bth{\begin{theorem}}
\def\eth{\end{theorem}}
\def\bc{\begin{corollary}}
\def\ec{\end{corollary}}
\def\bcj{\begin{conjecture}}
\def\ecj{\end{conjecture}}
\def\bpr{\begin{proposition}}
\def\epr{\end{proposition}}
\def\bde{\begin{definition}}
\def\ede{\end{definition}}
\def\E{\mathbb{E}}
\def\Pr{\mathbb{P}}
\newcommand{\be}{\begin{eqnarray}}
\newcommand{\ee}{\end{eqnarray}}
\newcommand{\R}{{\mathbb R}}
\renewcommand{\and}{\hbox{ {\rm and} }}
\newtheorem{theorem}{Theorem}[section]
\newtheorem{definition}{Definition}[section]
\newtheorem{lemma}[theorem]{Lemma}
\newtheorem{corollary}[theorem]{Corollary}
\newtheorem{proposition}[theorem]{Proposition}
\newtheorem{conjecture}[theorem]{Conjecture}
\newtheorem{question}[theorem]{Question}
\theoremstyle{definition}
\numberwithin{equation}{section}
\begin{document}
\title{Where does a random process hit a fractal barrier?}
\author{Itai Benjamini \and Alexander Shamov}

\date{April 2016}

\maketitle

\begin{abstract}
Given a Brownian path $\beta(t)$ on $\R$, starting at $1$, a.s. there is a singular time set $T_{\beta}$, such that   the first hitting time of $\beta$ by an independent
Brownian motion, starting at $0$, is in $T_{\beta}$ with probability one.
A couple of problems regarding hitting measure for  random processes are presented.
\end{abstract}

\section{Introduction}

The study of Harmonic (or hitting) measure for Brownian motion is a well developed subject with dramatic achievements and major problems which are still wide open, see ~\cite{GM}.
In this note we present a couple of problems regarding hitting measure for a wider class of random processes and obtain one result.

When does one dimensional Brownian motion starting at $0$, hits an independent Brownian motion  starting at $1$, which serves as the {\em barrier}?

We show that conditioning on the  barrier, a.s. with respect to the Wiener measure on barriers,
there is a singular time set (which is a function of the barrier only) that a.s. contains  the first hitting time of the barrier.

\section{Random processes in the plane}

Let $\gamma$ be an unbounded  one sided curve in the Euclidean plane.  Given a simply connected open bounded  domain $\Omega$ in the plane.
Reroot the origin of $\gamma$ at a uniformly chosen point of $\Omega$, and rotate $\gamma$ with an independent uniformly chosen angel, around it's root.
Look at the hitting point of this random translation and rotation of $\gamma$ on the boundary of the domain $\partial \Omega$.
For every root in $\Omega$ the hitting point maps the uniform measure on directions  $U[0,2\pi]$ to a measure on  $\partial \Omega$.

\begin{conjecture}
For any $\gamma$ and $\Omega$, for almost every root, the corresponding measure on  $\partial \Omega$ has $0$  two dimensional Lebesgue measure.
\end{conjecture}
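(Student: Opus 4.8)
\emph{Interpretation and reduction.} The statement has content only when $\partial\Omega$ itself carries positive planar measure, as it can for boundaries of Osgood type; for such domains I read ``the hitting measure has $0$ two--dimensional Lebesgue measure'' as ``$\mu_x$ is singular with respect to $\mathrm{Leb}_2\restr{\partial\Omega}$,'' i.e. $\mu_x$ is carried by a planar--null Borel subset of $\partial\Omega$. Here, for a root $x\in\Omega$ and an angle $\theta$, I write $H(x,\theta)\in\partial\Omega$ for the first exit point of the rerooted, rotated curve $\gamma_{x,\theta}(s)=x+R_\theta\gamma(s)$, and $\mu_x=(H(x,\cdot))_\ast\,U[0,2\pi]$. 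The plan is to encode ``for almost every root'' as a singularity statement for a single joint object: put $\Phi(x,\theta)=(x,H(x,\theta))\in\Omega\times\partial\Omega\subset\R^4$ and $\Lambda=\Phi_\ast(\mathrm{Leb}_2\restr{\Omega}\otimes U[0,2\pi])$. Disintegrating $\Lambda$ and the reference measure $\mathrm{Leb}_2\restr{\Omega}\otimes\mathrm{Leb}_2\restr{\partial\Omega}$ along the common first coordinate, one checks that $\Lambda\perp\mathrm{Leb}_4$ if and only if $\mu_x\perp\mathrm{Leb}_2\restr{\partial\Omega}$ for a.e.\ $x$. So it suffices to prove $\Lambda\perp\mathrm{Leb}_4$.

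\emph{The mechanism I would aim for.} Two clean facts point the way. First, for each \emph{fixed} rotation $\theta$ the slice $\Lambda_\theta:=(x\mapsto(x,H(x,\theta)))_\ast\mathrm{Leb}_2\restr{\Omega}$ is supported on $\{(x,x+v):v\in R_\theta\gamma(\R_{\ge 0})\}$, a Lipschitz image of $\R^2\times R_\theta\gamma(\R_{\ge 0})$; since $R_\theta\gamma(\R_{\ge 0})$ is planar null (as for any rectifiable or Brownian curve), each $\Lambda_\theta\perp\mathrm{Leb}_4$. Second, were the graph map $\Phi$ Lipschitz, its image would have Hausdorff dimension at most $3<4$, again forcing $\Lambda\perp\mathrm{Leb}_4$. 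The mechanism I would try to establish is a quantitative, almost-everywhere version of the latter: show that for a.e.\ $x$ the section $\theta\mapsto H(x,\theta)$ has image of $\sigma$--finite $\mathcal H^1$--measure (for instance, is of bounded variation, so its image is a rectifiable loop), whence that image is planar null and $\mu_x$ is singular. As a sanity check this is transparent when $\Omega$ is convex and $\gamma$ a ray: the exit point then sweeps $\partial\Omega$ monotonically, the section is of bounded variation for \emph{every} root, and its image is a null Jordan loop.

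\emph{The main obstacle.} The whole difficulty lives in reconciling the two facts above. Writing $\Lambda=\frac{1}{2\pi}\int_0^{2\pi}\Lambda_\theta\,d\theta$ exhibits $\Lambda$ as a rotational average of the mutually singular slices $\Lambda_\theta$, and an average of singular measures can be absolutely continuous (e.g.\ $\int_0^1\delta_t\,dt=\mathrm{Leb}$); indeed $\bigcup_\theta R_\theta\gamma(\R_{\ge 0})=\R^2$, so the union of the slice supports already fills $\R^4$ and there is no dimension obstruction after averaging. Because $\partial\Omega$ may be an arbitrary positive--area fractal, the hitting map need not even be continuous, and a priori $\theta\mapsto H(x,\theta)$ could be a measure--preserving, space--filling parametrization of a positive--area piece of $\partial\Omega$, which would make $\mu_x$ absolutely continuous. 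The crux is therefore to prove that rotational averaging against such a barrier cannot manufacture an absolutely continuous part for a.e.\ root --- a ``singularity survives averaging'' statement in the spirit of Marstrand's projection theorem, controlling the transverse regularity of the $\theta$--sweep against a wild boundary. I expect this step, equivalently the exclusion of space--filling hitting sections for almost every root, to be the genuine obstacle, and presumably the reason the assertion is posed as a conjecture rather than a theorem.
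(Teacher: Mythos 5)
You should know at the outset that the paper contains no proof of this statement: it is posed as a Conjecture, and the authors only remark that Makarov's theorem settles it when $\gamma$ is a Brownian path and that \cite{FF} has partial results when $\gamma$ is a straight line. So there is no argument in the paper to compare yours against, and the relevant question is whether your proposal closes the problem. It does not, and you say so yourself: the final step --- showing that the rotational average $\Lambda = \frac{1}{2\pi}\int_0^{2\pi}\Lambda_\theta\,d\theta$ of the mutually singular slices $\Lambda_\theta$ remains singular to $\mathrm{Leb}_4$, equivalently that the sections $\theta \mapsto H(x,\theta)$ cannot behave like space-filling parametrizations of positive-area pieces of $\partial\Omega$ for a positive-measure set of roots --- is exactly the content of the conjecture, not a technical lemma one can defer. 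Your reduction via disintegration (which, incidentally, parallels the measure-theoretic Proposition the paper uses for its Theorem 3.1 on Brownian barriers) and your observation that no dimension-counting or Marstrand-type projection argument applies after averaging are correct and clarifying, but they reformulate the difficulty rather than resolve it.

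One concrete error in what you do assert: the claim that each slice satisfies $\Lambda_\theta \perp \mathrm{Leb}_4$ because $R_\theta\gamma(\R_{\ge 0})$ is planar null is not available in the stated generality. The conjecture quantifies over \emph{any} unbounded one-sided curve $\gamma$, and a continuous curve can have positive area (Osgood/Peano-type curves), in which case even your fixed-$\theta$ singularity statement fails as stated; your parenthetical ``as for any rectifiable or Brownian curve'' silently restricts the class of $\gamma$. For such space-filling $\gamma$ the conjecture still makes sense (the hitting point is the first intersection with $\partial\Omega$, pushed forward from the one-dimensional parameter $\theta$), but your proposed mechanism --- bounded variation or $\sigma$-finite $\mathcal{H}^1$ image of the section, hence a null carrier --- has no a priori traction there either. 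So the proposal is best read as a correct identification of why the problem is hard, together with a sanity-check case (convex $\Omega$, $\gamma$ a ray), rather than as progress toward a proof.
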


Moreover,

\begin{question}
For any $\gamma$ and $\Omega$, for almost every root, the corresponding measure on  $\partial \Omega$ has Hausdorff dimension (at most) one?
\end{question}

It is of interest to prove even that dimension drops below $2$. Or better below the dimension of $\partial \Omega$ when it is strictly above $1$.
Also getting the result for a restricted family of curves, is of interest.
If $\gamma$ is a Brownian path then Makarov's theorem \cite{Ma} gives an affirmative answer.
For partial results on this conjecture when $\gamma$ is a straight line see ~\cite{FF}.

\subsection{Simple random walks on discrete fractals}

By Makarov's theorem~\cite{Ma} (and Jones and Wolff ~\cite{JW} for general domains) and it's adaptation by Lawler~\cite{La} via coupling to simple random walk,
it is  know that the dimension of the hitting measure for two dimensional Brownian motion drops to (at most) $1$.
We therefore suspect that harmonic measure for simple random walk on self similar planar fractals will also be at most $1$.
Here is a specific formulation.

\subsubsection{Sierpinski gasket}

\begin{figure}

    \centering
    \includegraphics[width=0.8\textwidth,natwidth=610,natheight=120]{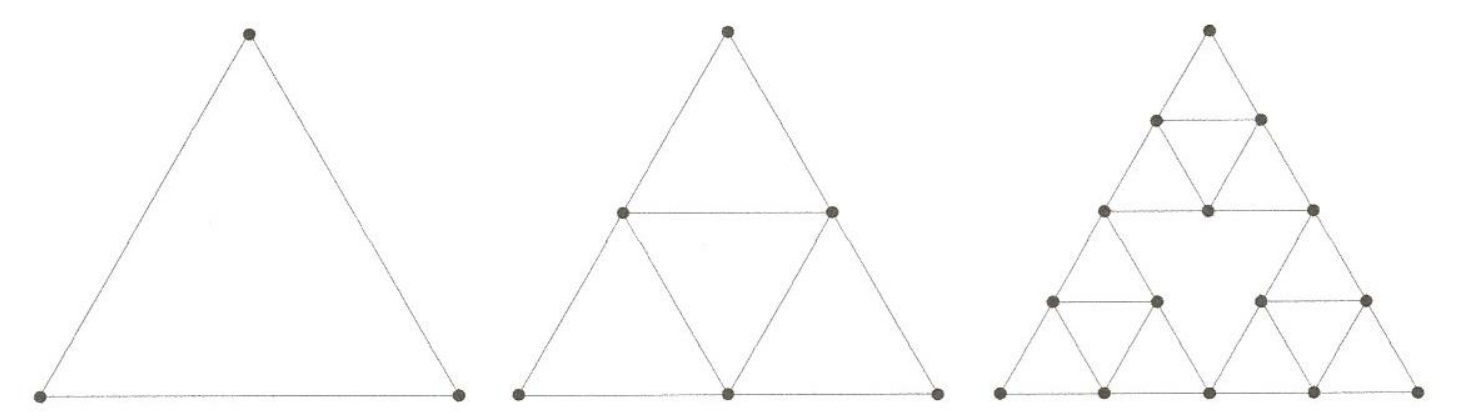}
    \caption{The first three generation of  Sierpinski gasket graphs sequence. \label{fig:S-gasket}}
\end{figure}

Given a subset $S$ of the vertices in the $n$-th generation of the  Sierpinski gasket graph sequence (see Figure~\ref{fig:S-gasket}).
\begin{question}
Show that the entropy of the hitting  measure  for a simple random walk starting at the top vertex on $S$ is at most $n$.
\end{question}

Note that in the $n$-th generation Sierpinski gasket graph, the size of the bottom side is $2^{n-1}+1$, which we believe
realizes the largest entropy possible.  (Entropy in base $2$,    $- \sum_i p_i \log_2 p_i$).

\subsection{Fractional BM}

Recall the probability Brownian motion in $\R^2$, starting at $(1,0)$   hits the negative $x$-axis first at $[-\epsilon, 0]$ behaves like $\epsilon^{1/2}$,
as epsilon goes to $0$.

We would like to have a natural statement along the lines that the rougher the process starting at $(1,0)$
the  larger the probability it will hit the negative $x$-axis first  near the tip.
E.g. if the process starting at $(1,0)$ is a two dimensional fBM with Hurst parameter $H$,
then as $H$ decreases the probability it hits the $\epsilon$ tip is growing (maybe it is about $\epsilon^H$ ?)

One can ask similar a question for the graph of one dimensional fBM and SLE curves.

\section{Random process on the line}

\begin{theorem} \label{thm:Singularity}
 Let $B$ and $W$ be independent standard Brownian motions on $\R$, and let $\sigma, c > 0$. Define $\tau$ to be the first time when $B$
 hits the barrier $c + \sigma W$, i.e.
 $$\tau := \inf\{t \mid B_t = c + \sigma W_t\}.$$
 Then conditionally on $W$, the distribution of $\tau$ is almost surely singular to the Lebesgue measure.
\end{theorem}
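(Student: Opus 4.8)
The plan is to fix the barrier $W$ and study the \emph{quenched} law $\mu_W$, i.e.\ the conditional distribution of $\tau$ given $W$. It is worth recording first that the \emph{annealed} law is absolutely continuous: unconditionally $B-\sigma W$ is a Brownian motion of variance $1+\sigma^2$ started at $0$, and $\tau = \inf\{t\mid B_t-\sigma W_t = c\}$ is its first passage to level $c$, which has the usual inverse-Gaussian (stable-$\tfrac12$) density. The theorem is thus genuinely a disintegration statement: an absolutely continuous measure is decomposed into conditionally singular pieces. To prove singularity I would invoke the standard differentiation-of-measures criterion: a finite Borel measure $\mu$ on $\R$ is singular with respect to Lebesgue measure if and only if its upper derivative $\overline D\mu(t)=\limsup_{\delta\to0}\mu([t-\delta,t+\delta])/(2\delta)$ equals $+\infty$ for $\mu$-almost every $t$. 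Since, conditionally on $W$, the realized hitting time $\tau$ is itself a sample from $\mu_W$, it suffices to show that almost surely (jointly in $B$ and $W$)
\[ \limsup_{\delta\to0}\frac{\mu_W([\tau-\delta,\tau+\delta])}{\delta}=\infty, \]
which recasts the problem as an almost sure statement about the joint law.

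The device I would use to estimate $\mu_W([\tau-\delta,\tau+\delta])$ is a second Brownian motion $B'$, independent of $B$ and driven by the \emph{same} barrier, with hitting time $\tau'$. Conditionally on $W$ the times $\tau,\tau'$ are i.i.d.\ with law $\mu_W$, and since $B'\perp B \mid W$ no further information is gained by conditioning on $B$; hence $\mu_W([\tau-\delta,\tau+\delta])=\Pr(\tau'\in[\tau-\delta,\tau+\delta]\mid W,B)$. The required input is then a quenched local estimate: for the fixed barrier, the fresh path $B'$ must hit within $\delta$ of $\tau$ with probability far exceeding $\delta$. The heuristic is pure scaling: over a window of length $\delta$ the barrier $\sigma W$ fluctuates by order $\sqrt\delta$, and once $B'$ has risen to within $O(\sqrt\delta)$ below the barrier it crosses within time $O(\delta)$ with probability of order one; the probability of being that close is of order $\sqrt\delta$, not $\delta$. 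One therefore expects $\mu_W([\tau-\delta,\tau+\delta])\asymp\delta^{1/2}$, i.e.\ local dimension $\tfrac12$, reflecting the stable-$\tfrac12$ nature of Brownian hitting times and incompatible with any absolutely continuous part, whose local dimension equals $1$ at $\mu$-almost every point.

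The hard part will be turning this heuristic into a rigorous almost sure bound, and two obstacles stand out. The first, which I expect to be the main difficulty, is that the ``$B'$ stays below the barrier on $[0,\tau]$'' constraint is global, so the near-boundary density of $B'$ at time $\tau-\delta$ that underlies the $\sqrt\delta$ estimate is itself a quenched, hitting-type quantity of exactly the kind we are trying to control; it cannot simply be treated as order one and must be extracted by a self-improving or bootstrap argument. The second is that controlling a $\limsup$ almost surely demands more than a first-moment computation: I would prove the bound along the geometric scales $\delta_k=2^{-k}$ and combine a second-moment (conditional variance) estimate with Borel--Cantelli to upgrade $\E[\mu_W([\tau-\delta_k,\tau+\delta_k])/\delta_k]\to\infty$ to almost sure divergence of the $\limsup$.

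Finally, a zero--one law coming from Brownian scaling of $W$ may fix the type of $\mu_W$ (purely absolutely continuous versus possessing a singular part) to be deterministic; under the time change $t\mapsto\lambda^2 t$ with $W_t\mapsto\lambda^{-1}W_{\lambda^2 t}$ the problem returns to the same form with $c$ replaced by $c/\lambda$, so the small-scale type is scale-invariant. Such a dichotomy would let one reduce the almost sure statement to a statement in the mean and so soften the second obstacle, leaving the quenched local estimate of the previous paragraph as the genuine crux of the proof.
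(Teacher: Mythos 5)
Your reduction machinery is fine as far as it goes (the Besicovitch differentiation criterion, and the device of an independent copy $B'$ so that $\mu_W([\tau-\delta,\tau+\delta])=\Pr(\tau'\in[\tau-\delta,\tau+\delta]\mid W,B)$), but the proposal stops exactly where the theorem begins: the quenched local estimate that you yourself call ``the genuine crux'' is not proved, and the heuristic you offer in its place is flawed. The claim that the probability of $B'$ lying within $O(\sqrt\delta)$ of the barrier is of order $\sqrt\delta$ tacitly assumes the quenched density of the gap $(c+\sigma W_t)-B'_t$, conditioned on no prior hit, is bounded below near $0$; in fact this density vanishes linearly near $0$ (meander/taboo behavior), which by itself gives back only $O(\delta)$ --- this is precisely the circularity you flag as ``obstacle one,'' but it invalidates the $\delta^{1/2}$ prediction rather than merely complicating it. Any genuine excess of $\mu_W([\tau-\delta,\tau+\delta])$ over $\delta$ must come from the atypical behavior of $W$ itself near $\tau$ (the quenched law is size-biased toward times where the barrier cooperates), so the correct exponent, if there is one, must depend on $\sigma$ and degenerate as $\sigma\to 0$ (for a nearly constant barrier the quenched law is close to the absolutely continuous first-passage law); a $\sigma$-independent $\delta^{1/2}$ cannot be right. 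Finally, the devices you propose to close the gap cannot do so: first moments see only the annealed law, which you correctly note is absolutely continuous, and divergence of $\E[\mu_W([\tau-\delta,\tau+\delta])]/\delta$ is compatible with $\mu_W$ being a.s.\ absolutely continuous with a non-square-integrable density, so neither a mean statement nor a scaling zero--one law distinguishes singular from absolutely continuous; and second moments here are dominated by rare barrier configurations, so Borel--Cantelli yields upper bounds, never the almost sure lower bounds you need.

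The paper's proof avoids local estimates entirely, and it is worth seeing why that is the natural route. By the disintegration proposition (applied symmetrically, disintegrating the joint law $\Pr(d\omega)\,D(\omega,dt)$ in the $t$ variable rather than the $\omega$ variable), almost sure singularity of the conditional law of $\tau$ given $W$ is equivalent to: for Lebesgue-a.e.\ $t$, the law of $\omega$ conditioned on $\{\tau=t\}$, restricted to $\sigma(W)$, is singular to $\Pr$ restricted to $\sigma(W)$. Rotating $(B,W)$ into the independent pair $X=(B-\sigma W)/\sqrt{1+\sigma^2}$, $Y=(\sigma B+W)/\sqrt{1+\sigma^2}$, the conditioning acts only on $X$, and Williams' path decomposition identifies the conditioned reversed path: under $\{\tau=t\}$ the process $W_{t-s}-W_t$ is locally equivalent to $\alpha\,\mathrm{BES}(3)+\sqrt{1-\alpha^2}\,\mathrm{BM}$ with $\alpha=-\sigma/\sqrt{1+\sigma^2}$, whereas unconditionally it is a Brownian motion. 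These two laws are distinguished by their almost sure law-of-the-iterated-logarithm constants ($\le\sqrt{1-\alpha^2}<1$ versus $1$), a germ event at $s=0$. Note that this discrepancy is only a constant factor at the $\sqrt{2s\log\log(1/s)}$ scale, i.e.\ the two conditioned descriptions are mutually absolutely continuous on any time window bounded away from $0$ --- which is a strong indication that the mass ratio $\mu_W([\tau-\delta,\tau+\delta])/\delta$ diverges far more slowly than $\delta^{-1/2}$, and that any proof attempting to exhibit a fixed polynomial local dimension drop, as yours does, is aiming at a false intermediate statement. If you want to salvage your outline, the quenched estimate you need is exactly a description of how $W$ behaves near $\tau$ under the conditioned law, and that is the paper's argument.
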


In the proof we will make use of the following standard fact from measure theory.

\begin{proposition} \label{prop:Disintegration}
 Let $M, N$ be probability measures on $X \times Y$, a product of standard Borel spaces. Consider the disintegration of $M, N$ with
 respect to the $X$-variable (i.e. with respect to the canonical projection $X \times Y \to X$). We write it as follows:
 $$M(dx, dy) = M_X(dx) M_{Y \mid X}(x, dy)$$
 $$N(dx, dy) = N_X(dx) N_{Y \mid X}(x, dy)$$
 where $M_X$ (resp. $N_X$) is the pushforward of $M$ (resp $N$) under $X \times Y \to X$, and $M_{Y \mid X}(x)$
 (resp. $N_{Y \mid X}(x)$) is corresponding conditional of $y$ given $x$.
 Assume that $M_X$ is equivalent (i.e. mutually absolutely continuous) to $N_X$. Then the following are equivalent:
 \begin{enumerate}
  \item $M$ is singular to $N$
  \item $M_{Y \mid X}(x)$ is singular to $N_{Y \mid X}(x)$ for $M_X$-almost all $x$
 \end{enumerate}
\end{proposition}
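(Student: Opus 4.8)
The plan is to reduce the whole statement to a single computation with jointly measurable fiber densities against one reference measure, after which both the global and the fiberwise notions of singularity become the assertion that a product of two complementary densities vanishes, and the equivalence follows from Tonelli. First I would dispose of the base measures. Since $M_X \sim N_X$, the density $dN_X/dM_X$ is strictly positive $M_X$-almost everywhere, so the measure $N'$ defined by $N'(dx,dy) = M_X(dx)\,N_{Y\mid X}(x,dy)$ (same conditionals as $N$, but base $M_X$) satisfies $dN/dN'(x,y) = (dN_X/dM_X)(x) > 0$ and is therefore equivalent to $N$. Replacing $N$ by an equivalent measure preserves both sides of the claimed dichotomy: $M \perp N \iff M \perp N'$ because singularity is invariant under passing to an equivalent measure, and the conditionals of $N'$ coincide with those of $N$, so condition (2) is unchanged. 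Hence I may assume $M_X = N_X =: \lambda$.

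Next I would introduce a common reference. Set $\rho := M + N$ on $X \times Y$ and, fiberwise, $\rho_x := M_{Y\mid X}(x) + N_{Y\mid X}(x)$; then $\rho = \int \rho_x\,\lambda(dx)$ is a disintegration of $\rho$ over $X$ with base $\lambda$. Because $X$ and $Y$ are standard Borel, the measurable Radon--Nikodym theorem for disintegrations yields jointly measurable $m, n : X \times Y \to [0,1]$ with $M_{Y\mid X}(x,dy) = m(x,y)\,\rho_x(dy)$ and $N_{Y\mid X}(x,dy) = n(x,y)\,\rho_x(dy)$, where $m(x,y) + n(x,y) = 1$ for $\rho_x$-almost every $y$, for $\lambda$-almost every $x$. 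The crucial bookkeeping point is that these same functions serve as the global densities: integrating the fiber identities against $\lambda$ and using $\rho = \int \rho_x\,\lambda(dx)$ gives $dM/d\rho = m$ and $dN/d\rho = n$ jointly on $X \times Y$.

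With this in place the proposition is essentially formal. For two measures written as $m\rho_x$ and $n\rho_x$ with $m + n = 1$, mutual singularity is equivalent to $mn = 0$ almost everywhere (the set $\{0 < m < 1\}$ is the only obstruction and must be null). Applying this on each fiber shows that condition (2) is equivalent to $\int m(x,y)\,n(x,y)\,\rho_x(dy) = 0$ for $\lambda$-almost every $x$, while applying it on the product shows that $M \perp N$ is equivalent to $\int\!\int m(x,y)\,n(x,y)\,\rho_x(dy)\,\lambda(dx) = 0$. Since the integrand is nonnegative, Tonelli's theorem makes the double integral vanish precisely when the inner integral vanishes for $\lambda$-almost every $x$, so the two conditions coincide.

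I expect the only genuine obstacle to be the measurable disintegration of densities, i.e. producing $m$ and $n$ jointly measurable with the stated fiber interpretation and the normalization $m + n = 1$; this is exactly the place where the standard Borel hypothesis is used, and everything downstream is bookkeeping. The remaining care is in the routine identifications used above, namely that the fiber densities integrate up to the global densities against $\rho$, and that the reduction to $M_X = N_X$ leaves both conditions invariant; these I would verify but do not anticipate any difficulty with them.
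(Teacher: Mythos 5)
Your proof is correct. There is, however, nothing in the paper to compare it against: the authors state Proposition \ref{prop:Disintegration} as a ``standard fact from measure theory'' and give no proof, using it as a black box in the proof of Theorem \ref{thm:Singularity}, so your argument is a legitimate self-contained verification of that black box. All the steps check out: the reduction to $M_X = N_X =: \lambda$ via $N'(dx,dy) := M_X(dx)\,N_{Y\mid X}(x,dy)$ is valid because $dN/dN'(x,y) = (dN_X/dM_X)(x) > 0$, and both singularity and the fiberwise condition are invariant under replacing $N$ by the equivalent $N'$; the characterization of mutual singularity of $m\mu$ and $n\mu$ by $mn = 0$ $\mu$-a.e.\ is correct (and in fact does not even need the normalization $m+n=1$); and Tonelli applied to the disintegration $\rho(dx,dy) = \lambda(dx)\,\rho_x(dy)$ closes the loop between the global and fiberwise statements. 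The one step deserving care is exactly the one you flagged, the jointly measurable fiber densities, and here I would reverse the order of your bookkeeping: define $m := dM/d\rho$ and $n := dN/d\rho$ directly by Radon--Nikodym on the product space (these exist with no appeal to standard Borel, since $M, N \le \rho$), then note that
$$M(dx,dy) = m(x,y)\,\rho(dx,dy) = \lambda(dx)\,\bigl[m(x,y)\,\rho_x(dy)\bigr]$$
exhibits a disintegration of $M$ over $\lambda$, so the almost-everywhere uniqueness of disintegrations --- this is where the standard Borel hypothesis actually enters --- forces $M_{Y\mid X}(x,dy) = m(x,y)\,\rho_x(dy)$ for $\lambda$-a.e.\ $x$, and likewise for $n$. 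With that ordering, the identity you singled out as ``crucial bookkeeping'' (fiber densities integrating up to global densities) is automatic rather than something to verify, and the rest of your argument goes through verbatim.
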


%
%

Another fact we will need is the Bessel(3)-like behavior of the Brownian motion immediately before hitting a constant barrier.
This is an immediate consequence of Williams' Brownian path decomposition theorem (e.g. Theorem VII.4.9 in \cite{RY}).

\begin{proposition} \label{prop:Bessel3}
 Consider a Brownian motion $B$ starting from $0$, and let $c>0$. Let $\tau$ be the hitting time $\tau := \inf{t \mid B_t = c}$. Then
 for any $\varepsilon > 0$ the conditional distribution of $(c - B_{T-s})_{s=0}^{T-\varepsilon}$ conditioned on
 $\tau = T > \varepsilon$ is equivalent to that of a Bessel(3) process starting from $0$ restricted to the time interval
 $[0,T-\varepsilon]$.
\end{proposition}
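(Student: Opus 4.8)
The plan is to derive the statement from Williams' time reversal theorem together with a single absolute-continuity computation based on the Markov property. First I would invoke Williams' decomposition (Theorem VII.4.9 in \cite{RY}) in the following form: if $B$ starts at $0$ and $\tau = \inf\{t \mid B_t = c\}$, then the time-reversed, reflected path $(c - B_{\tau - s})_{0 \le s \le \tau}$ has the law of a Bessel(3) process $R$ started at $0$ and run up to its last passage time $\gamma := \sup\{s \mid R_s = c\}$ at the level $c$. Under this identification the hitting time $\tau$ of $B$ corresponds exactly to the last passage time $\gamma$ of $R$, so that conditioning on $\{\tau = T\}$ becomes conditioning the Bessel(3) process on $\{\gamma = T\}$. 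The claim therefore reduces to showing that, conditioned on $\gamma = T$, the restriction $(R_s)_{0 \le s \le T - \varepsilon}$ is equivalent to an unconditioned Bessel(3) process on $[0, T-\varepsilon]$.

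To establish this equivalence I would disintegrate over the path up to time $T - \varepsilon$ and apply the Markov property at that time. Given $\mathcal{F}_{T-\varepsilon}$, and in particular given $R_{T-\varepsilon} = x$, the conditional law of the event $\{\gamma = T\}$ depends on the past only through $x$: it is governed by a Bessel(3) process started at $x$ making its last visit to $c$ after an additional time $\varepsilon$. Writing $\rho_\varepsilon(x)$ for the corresponding density at time lag $\varepsilon$ and $f(T)$ for the density of $\gamma$ under the Bessel(3) law started at $0$, Bayes' rule yields that the conditional law of $(R_s)_{0 \le s \le T-\varepsilon}$ given $\gamma = T$ has Radon--Nikodym derivative
$$\frac{\rho_\varepsilon(R_{T-\varepsilon})}{f(T)}$$
with respect to the unconditioned Bessel(3) law restricted to $\mathcal{F}_{T-\varepsilon}$, the normalization being consistent because $\E[\rho_\varepsilon(R_{T-\varepsilon})] = f(T)$. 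Since $\rho_\varepsilon$ is strictly positive and finite on $[0,\infty)$ and $f(T) \in (0,\infty)$ for Lebesgue-almost every $T$, this density is almost surely strictly positive and finite, which is exactly mutual absolute continuity. The role of cutting off the final interval $[T-\varepsilon, T]$ is precisely to stay away from the passage time $\gamma = T$, where the conditioning is genuinely singular.

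The main technical point --- and the step I expect to be the real work --- is the justification of the disintegration and of the properties of $\rho_\varepsilon$. One must check that the last passage time at level $c$ of a Bessel(3) process started at $x$ admits a density that is jointly measurable in $(x,T)$ and strictly positive and finite for every $x \ge 0$ and $T > 0$; this follows from the strong Markov property together with the explicit, smooth, strictly positive transition density of the Bessel(3) semigroup, but it has to be stated carefully so that the Radon--Nikodym derivative above is well defined for almost every $T$. Once positivity and finiteness of $\rho_\varepsilon$ are in hand, the equivalence --- and hence the proposition --- is immediate.
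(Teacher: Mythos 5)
Your proposal is correct, and it takes the same route the paper intends: the paper offers no written proof at all beyond declaring the proposition ``an immediate consequence of Williams' Brownian path decomposition theorem (Theorem VII.4.9 in \cite{RY}).'' Your argument --- time reversal identifying $(c-B_{\tau-s})_{0\le s\le\tau}$ with a Bessel(3) process run to its last passage time at $c$, followed by the Markov/Bayes computation showing that conditioning on the last passage time yields the density $\rho_\varepsilon(R_{T-\varepsilon})/f(T)$, positive and finite, on $\mathcal{F}_{T-\varepsilon}$ --- is exactly the natural fleshing-out of that citation, with the conditioning step (which the paper leaves entirely implicit) handled correctly.
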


\begin{proof}[Proof of Theorem \ref{thm:Singularity}]
 Consider the random measure $\delta_\tau$ and $D := \E[\delta_\tau \mid W]$. The latter is exactly the conditional
 distribution of $\tau$ given $W$. By Proposition \ref{prop:Disintegration} (applied to $X := \Omega$, $Y := \R$),
 almost sure singularity of $D = D(\omega, dt)$ to the (determinstic) Lebesgue measure is equivalent to the singularity of
 $\Pr(d\omega) D(\omega, dt)$ to $\Pr(d\omega) dt$. On the other hand, the two spaces $X$ and $Y$ in Proposition
 \ref{prop:Disintegration} play symmetric roles, so instead one may disintegrate with respect to the $t \in \R$ variable.
 More precisely, let $\Pi(t, d\omega)$ (resp. $\pi(t, d\omega)$) be the disintegration of $\Pr(d\omega) D(\omega, dt)$
 (resp. $\Pr(d\omega) \delta_\tau(\omega, dt)$) with respect to $t$. Then the $\Pr$-almost sure singularity of $D$ with respect to the
 Lebesgue measure is equivalent to the singularity of $\Pi(t)$ with respect to $\Pr$ for Lebesgue-almost all $t$. On the other hand, the
 measures $\Pr(d\omega) D(\omega, dt)$ and $\Pr(d\omega) \delta_\tau(\omega, dt)$ agree when restricted to the $\sigma$-algebra
 $\sigma(W) \otimes \mathrm{Borel}(\R)$; therefore, $\Pi(t)$ and $\pi(t)$ agree on $\sigma(W)$ for Lebesgue-almost all $t$. Since $D$ is
 measurable with respect to $\sigma(W)$, it is enough to verify that $\pi(t)$ is singular to $\Pr$ when restricted to $\sigma(W)$.

 Using Proposition \ref{prop:Bessel3} we can characterize $\pi(t)$ explicitly, at least up to equivalence. Indeed, the time when $B$
 hits $c + \sigma W$ is exactly the time when
 $$X := \frac{1}{\sqrt{1 + \sigma^2}} B - \frac{\sigma}{\sqrt{1 + \sigma^2}} W,$$
 which is itself a standard Brownian motion under $\Pr$, hits the constant barrier $\tilde c := \frac{c}{\sqrt{1 + \sigma^2}}$. Thus by
 Proposition \ref{prop:Bessel3}, the distribution of $\tilde c - X_{t - \cdot}$ under $\pi(t)$ is (locally) equivalent to Bessel(3).
 On the other hand,
 $$Y := \frac{\sigma}{\sqrt{1 + \sigma^2}} B + \frac{1}{\sqrt{1 + \sigma^2}} W$$
 is $\Pr$-independent of $X$, and since the $\tau$ is measurable with respect to $X$, the independent part $Y$ is not affected by our
 change of measure. Thus under $\pi(t)$, $X$ and $Y$ are still independent, and $Y_{t - \cdot}$ remains (locally) equivalent to a
 Brownian motion.

 In order to prove the singularity result we only need the restriction of our measures to $\sigma(W)$. Since
 $$W = -\frac{\sigma}{\sqrt{1+\sigma^2}} X + \frac{1}{\sqrt{1+\sigma^2}} Y,$$
 we see that under $\pi(t)$, $W_{t - \cdot}$ is locally equivalent to a combination of a Bessel(3) and an independent Brownian
 motion. Under $\Pi$, however, it is locally a Brownian motion. Thus the problem reduces to the proving that
 the local behaviour at time zero of the sum of independent processes
 $$U \sim \alpha \cdot \mathrm{BES(3)} + \sqrt{1 - \alpha^2} \cdot \mathrm{BM}$$
 is almost surely distinguishable from that of $V \sim \mathrm{BM}$, where $\alpha = -\frac{\sigma}{\sqrt{1+\sigma^2}} < 0$.
 This can be achieved by, say, noting that these processes satisfy a law of iterated logarithm with different almost sure constants.
 Namely,
 $$\limsup_{s \to 0} \frac{V_s}{\sqrt{2 s \log \log s}} = 1$$
 $$\limsup_{s \to 0} \frac{U_s}{\sqrt{2 s \log \log s}} \le \sqrt{1 - \alpha^2} < 1$$
\end{proof}

\begin{question}
Study this phenomena for larger class of barriers, e.g. iterated function systems.
Give sharper bounds on the dimension of the the set which a.s. contains the hitting time.
\end{question}

To study this for iterated function systems, we need a  uniform bound on the radon nikodym
derivative of the harmonic measure with respect to the uniform measure, at all scales.
\medskip

Here is a formulation of this problem for {\em random fields}. Consider a function from $\R^d$ to $\R^n$ as a barrier, and look when a random field indexed by $\R^d$
hits the barrier, where the hitting index is defined say as the index with the smallest $L_2$ norm.

\section{Further comments}

\begin{itemize}

\item{\em Bourgain's proof}

Bourgain~\cite{Bo} proved a dimension drop result for Brownian motion in $\R^d$ for any $d$.
Two properties of BM are used in the clever argument, {\em uniform Harnack inequality}
at all scales and the {\em Markov property}, to get independent between scales.
These two properties hold for a wider set of processes in a larger set of spaces, (e.g. Brownian motion on  nilpotent groups and fractals).
Also weaker forms of these properties are sufficient to get some drop.

\item{\em Random walk on graphs}

This note concerns with harmonic measure in "small spaces" of dimension at most two.
See ~\cite{BY} for a study of hitting measure for the simple random walk in the presence of a spectral gap:
on highly connected graphs such as expanders, simple random walk is mixing fast and it is shown that it hits
the boundary of sets in a rather uniform way.
More involved behavior arises for graphs which are neither polynomial in the diameter nor expanders,
see ~\cite{BY}.

\item{\em Let's play}

Rules: each of the $k \geq 2 $ players  picks independently a unit length path (not necessarily a segment)  in the Euclidean plane that contains the origin.
Let $S$ be the union of all the $k$ paths. Look at the harmonic measure from infinity on $S$.
The winner is the player that his path, gets the maximal harmonic measure.

Is choosing a segment from the origin to a random point on the unit circle, independently by each of the  players, a {\em Nash equilibrium}?

\end{itemize}

\medskip
\noindent

\end{document}